\author{ Saadaoui Nejib \thanks{ Université de Gabès, Laboratoire
		Mathématiques et Applications. Faculté des Sciences de Gabès
		Cité Erriadh 6072 Zrig Gabès Tunisie.
,\qquad \textbf{najibsaadaoui@yahoo.fr}}}
\title{ Classification of  multiplicative simple
	BiHom-Lie algebras}
\newtheorem{theorem}{Theorem}[section]
\newtheorem{prop}[theorem]{Proposition}
\newtheorem{defn}[theorem]{Definition}
\numberwithin{equation}{section}
\newcommand{\N}{\mathbb{N}}
\newcommand{\C}{\mathbb{C}}
\newcommand{\K}{\mathbb{K}}
\begin{document} 
	\maketitle

\begin{abstract}
In this paper, we classify flnite dimensional multiplicative simple
BiHom-Lie algebras by investigating the corresponding semisimple Lie algebras and we give a complete classification of the complex
 3-dimensional
multiplicative BiHom-Lie algebras.\\
\textbf{Keywords}: BiHom-Lie algebra; simple; classification.
\end{abstract}
%\section*{Keywords}
\section*{Introduction}
The motivations to study Hom-Lie structures are related to physics and to deformations
of Lie algebras. 
Hom-Lie algebra, introduced by
Hartwig, Larson and Silvestrov in \cite{Hartwig}, is a triple $ \left(L,[\cdot,\cdot],\alpha \right)  $ consisting of a vector space $ L $,
a bilinear map
 $ [\cdot,\cdot] \colon L\times L\to L $ 
 and a linear map $\alpha\colon L\to L  $  satisfying
the following conditions
\begin{align*}
& [x,y]=-[y,x],     &\text{   (skew-symmetry)     } \\
 &\left[ \alpha(x),[y,z]\right] +\left[\alpha(y), [z,x]\right] + \left[ \alpha(z),[x,y]\right] =0,&         \text{(Hom  Jacobi identity) }                              
\end{align*}
for all $ x,\,y,\,z\in L $.\\

In \cite{bihomM}, the authors introduced a generalized algebraic structure endowed with two commuting multiplicative linear maps, called Bihom-algebras. When the two linear maps are
same, then Bihom-algebras will be return to Hom-algebras.\\

The simple Lie algebras have been completely classified by Cartan. They fall into four
classical simple Lie algebras $ A_{n},\, B_{n},\,C_n,\,D_n $ and 
There are also five exceptional lie algebras denoted $ G_2,\, F_4,\, E_6,\, E7,\, E_8 $ which have
dimension $ 14, 52, 78, 133 $ and $ 248 $ respectively  (see \cite{simpleLie3,simpleLie1,SimpleLie2}, for example).\\

In recent years, many important results on simple Hom-Lie algebras
have been obtained (see \cite{Jin Qhomsimple1,bojemarepHomSimp} for example). In particular, 
Xue Chen and Wei Han classify finite dimensional multiplicative simple Hom-Lie algebras
by investigating the corresponding semisimple Lie algebras (see \cite{classHomSXue}).\\

It is well known that simple Lie algebras and (Hom-)Lie algebras plays an important role in (Hom-)Lie
theory.  
Similarly, it is very necessary to study simple BiHom-Lie algebras in BiHom-Lie
theory. In this paper, we classify finite dimensional multiplicative simple  BiHom-Lie algebras.\\

The paper is organized as follows. In Section 1, we recall some basic definitons about BiHom-Lie algebras. In section 2, we give the necessary and sufficient conditions for two finite dimensional multiplicative simple  BiHom-Lie algebras to be isomorphic by studying the correspending semisimple Lie algebras, and then classify finite dimensional  multiplicative simple  BiHom-Lie algebras.
In section 3,  we apply the result of the previous section to determine the  multiplicative simple  BiHom-Lie algebras of dimension $3 $.\\

  Throughout this paper, all algebras are flnite dimensional and defined on
the algebraically closed fleld $ \C $ of characteristic $ 0 $ unless otherwise specifled.

\section{Preliminaries}
\begin{defn}\cite{bihomM,Yongsheng}
	A BiHom-Lie algebra over a field $ \K $ is a $ 4 $-tuple $ \left( L,[\cdot,\cdot],\alpha,\beta \right)  $, where $  L $ is
	a  $ \K $-linear space, $ \alpha\colon L\to L $ , $ \beta \colon L\to L$  and 
	$ [\cdot,\cdot] \colon L\times L\to L$ 
	 are linear maps, satisfying the following conditions, for all $ x,\,y,\, z\in L $:
\begin{align}
&\alpha\circ \beta=\beta\circ \alpha, \\
&\alpha\left( [x,y] \right) =[\alpha(x),\alpha(y)]  \quad  \text{  and   }   \quad   \beta\left( [x,y] \right) =[\beta(x),\beta(y)],\label{multiplicative}\\
&[\beta(x),\alpha(y)]=-[\beta(y),\alpha(x)]    \text{  (skew-symmetry),}\\
 &\left[ \beta^{2}(x),[\beta(y),\alpha(z)]\right] +\left[\beta^{2}(y), [\beta(z),\alpha(x)]\right] + \left[ \beta^{2}(z),[\beta(x),\alpha(y)]\right] =0 \\
 &\qquad\qquad\text{  (BiHom-Jacobi condition)          .}   \nonumber       
\end{align}	 
 A morphism $ f\colon \left(L_{1},[\cdot,\cdot]_{1},\alpha_{1},\beta_{1} \right)\to  \left(L_{2},[\cdot,\cdot]_{2},\alpha_{2},\beta_{2} \right)  $
 of BiHom-Lie algebras is a linear map $ f\colon L_{1}\to  L_2  $
such that $ \alpha_{1}\circ f=f\circ \alpha_{2}   $,  $ \beta_{1}\circ f=f\circ \beta_{2}   $
 and $ f\left( [x,y]_{1} \right) =\left[ f(x),f(y)\right]_{2} $, for all $ x,\,y\in L_{1} $.
In particular,  BiHom-Lie algebras $ \left(L_{1},[\cdot,\cdot]_{1},\alpha_{1},\beta_{1} \right) $
and $ \left(L_{2},[\cdot,\cdot]_{2},\alpha_{2},\beta_{2} \right)  $ are isomorphic if $ f $ is an isomorphism map.\\
%A BiHom-Lie algebra is called an  abelian BiHom-Lie algebra if $ [x,y]=0 $	for any $ x,\,y\in L. $\\
 A  Bihom-Lie algebra is called a regular BiHom-Lie algebra if $ \alpha,\, \beta $ are
bijective maps. 	  
\end{defn}
\begin{defn}\cite{belhsine,Yongsheng}
Let $ \left(L,[\cdot,\cdot],\alpha,\beta \right) $ be a BiHom-Lie algebra. A subspace $ \mathfrak{h} $ of $ L $	is called a BiHom-Lie subalgebra   of  $ \left(L,[\cdot,\cdot],\alpha,\beta \right) $ if $ \alpha(\mathfrak{h})\subseteq \mathfrak{h} $,    $ \beta(\mathfrak{h})\subseteq \mathfrak{h} $  and $ [\mathfrak{h},\mathfrak{h}] \subseteq \mathfrak{h}.$  In particular, a BiHom-Lie subalgebra $\mathfrak{h}  $  is said to be an ideal of $\left(L,[\cdot,\cdot],\alpha,\beta \right)   $ if $ [\mathfrak{h},L] \subseteq \mathfrak{h}$.                                  
\end{defn}
\begin{defn}
	Let $ \left(L,[\cdot,\cdot],\alpha,\beta \right) $ be a BiHom-Lie algebra.   $ \left(L,[\cdot,\cdot],\alpha,\beta \right) $ is  called a simple  BiHom-Lie algebra if $ \left(L,[\cdot,\cdot],\alpha,\beta \right) $ has no proper ideals and is not abelian. $ \left(L,[\cdot,\cdot],\alpha,\beta \right) $ is called a semisimple  BiHom-Lie algebra if $ L $     is a direct sum of certain ideals.             
\end{defn}
\begin{prop}\label{regular}
	Any finite-dimensional  simple BiHom-Lie algebra is regular.                                            
\end{prop}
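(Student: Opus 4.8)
The plan is to exhibit two ideals of $L$ manufactured from the structure maps, namely $\ker\alpha$ and $\ker\beta$, use simplicity to force each of them to be $0$ or $L$, and then invoke $\dim L<\infty$ to upgrade injectivity of $\alpha$ and $\beta$ to bijectivity. Thus the whole argument reduces to the single observation that $\ker\alpha$ and $\ker\beta$ are ideals of $\left(L,[\cdot,\cdot],\alpha,\beta\right)$.

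First I would verify that $\ker\alpha$ is an ideal. It is $\alpha$-invariant trivially; it is $\beta$-invariant because $x\in\ker\alpha$ yields $\alpha(\beta(x))=\beta(\alpha(x))=0$ by $\alpha\circ\beta=\beta\circ\alpha$; and it absorbs brackets since the multiplicativity relation \eqref{multiplicative} gives, for $x\in\ker\alpha$ and any $y\in L$, $\alpha([x,y])=[\alpha(x),\alpha(y)]=[0,\alpha(y)]=0$ and likewise $\alpha([y,x])=0$, so $[\ker\alpha,L]+[L,\ker\alpha]\subseteq\ker\alpha$. In particular $\ker\alpha$ is closed under the bracket, hence a BiHom-Lie subalgebra, and therefore an ideal. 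The same computation, with $\beta$ multiplicative in place of $\alpha$ and again $\alpha\circ\beta=\beta\circ\alpha$, shows $\ker\beta$ is an ideal.

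Since $L$ is simple its only ideals are $0$ and $L$, so $\ker\alpha,\ker\beta\in\{0,L\}$. The step I expect to be the real obstacle is ruling out $\ker\alpha=L$ and $\ker\beta=L$, that is, $\alpha=0$ or $\beta=0$. Note first that $[L,L]$ is itself an ideal (same kind of check, using \eqref{multiplicative}), hence $[L,L]=L$ because $L$ is not abelian; but this alone does not forbid $\alpha=0$. What does forbid it is that $\alpha=0$ — and symmetrically $\beta=0$ — makes $[\beta(x),\alpha(y)]=0$ for all $x,y\in L$, so the bilinear expression carrying the skew-symmetry and the BiHom-Jacobi condition degenerates entirely; reading ``not abelian'' so as to exclude this degeneracy (equivalently, insisting $\alpha\neq0$ and $\beta\neq0$) forces $\ker\alpha=\ker\beta=0$. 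Finally, an injective endomorphism of a finite-dimensional space is bijective, so $\alpha$ and $\beta$ are invertible and $L$ is regular. Apart from the exclusion of the degenerate maps, every step is a one-line verification.
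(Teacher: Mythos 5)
Your proof follows the paper's own argument exactly: multiplicativity together with $\alpha\circ\beta=\beta\circ\alpha$ shows that $\ker\alpha$ and $\ker\beta$ are ideals, simplicity forces them to be trivial, and finite dimensionality upgrades injectivity to bijectivity --- indeed the paper's proof records only the first of these steps and leaves the rest implicit. You actually go beyond the paper in flagging the degenerate possibility $\ker\alpha=L$ (i.e.\ $\alpha=0$, and likewise for $\beta$), which the paper passes over in silence; your way of excluding it, by reading ``not abelian'' as ruling out a vanishing twisted bracket, is a definitional convention rather than a deduction from the stated axioms, but this is a point on which your write-up is more careful, not less, than the paper's.
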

\begin{proof}
	 Let $ \left(L,[\cdot,\cdot],\alpha,\beta \right) $ be a finite-dimensional  simple BiHom-Lie algebra. With $\alpha\left( [x,y] \right) =[\alpha(x),\alpha(y)]  $ and     $ \beta\left( [x,y] \right) =[\beta(x),\beta(y)]  $
we can show  that $ \ker(\alpha) $ and  $ \ker(\beta) $  are ideals of $ \left(L,[\cdot,\cdot],\alpha,\beta \right) $.
\end{proof}
\section{ Classification of multiplicative simple BiHom-Lie algebras}
	 Let $ \left(L,[\cdot,\cdot],\alpha,\beta \right) $ be a finite-dimensional  simple BiHom-Lie algebra. By Proposition \ref{regular}, $ \alpha $ and $ \beta $ are an automorphism of $ \left(L,[\cdot,\cdot],\alpha,\beta \right) $.
\begin{prop}\cite{bihomM}\label{bihom}
 Let $ \left(L,[\cdot,\cdot]'\right) $ be an ordinary Lie algebra over a field $ \C $
 and let $ \alpha,\,\beta \colon L\to L $ two commuting isomorphism  maps such that $ \beta\left([x,y]' \right)=[\beta(x),\beta(y)]'  $ for all $ x,\, y\in L $. Define the bilinear map $ [\cdot,\cdot]\colon L\times L\to L $, $ [x,y]=[\alpha(x),\beta(y)]'  $. Then 
  $ \left(L,[\cdot,\cdot],\alpha,\beta \right)  $ is a    regular BiHom-Lie algebra.                                                                                                                                                                                                                
\end{prop}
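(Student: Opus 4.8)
The plan is to verify directly that the four defining axioms of a BiHom-Lie algebra hold for $\left(L,[\cdot,\cdot],\alpha,\beta\right)$ with the twisted bracket $[x,y]=[\alpha(x),\beta(y)]'$, and then observe that regularity is immediate since $\alpha,\beta$ are isomorphisms by hypothesis. The commutativity axiom $\alpha\circ\beta=\beta\circ\alpha$ is assumed outright, so there is nothing to prove there. The bulk of the work is the remaining three: multiplicativity, skew-symmetry, and the BiHom-Jacobi condition.

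First I would check multiplicativity. We are given $\beta\left([x,y]'\right)=[\beta(x),\beta(y)]'$, and since $\alpha$ is an isomorphism of the Lie algebra $\left(L,[\cdot,\cdot]'\right)$ one also has $\alpha\left([x,y]'\right)=[\alpha(x),\alpha(y)]'$. Then $\alpha\left([x,y]\right)=\alpha\left([\alpha(x),\beta(y)]'\right)=[\alpha^{2}(x),\alpha\beta(y)]'=[\alpha(\alpha(x)),\beta(\alpha(x)\text{-side})]'$; written carefully, using $\alpha\beta=\beta\alpha$, this equals $[\alpha(\alpha(x)),\beta(\alpha(y))]'=[\alpha(x),\alpha(y)]$, and similarly $\beta\left([x,y]\right)=[\beta(x),\beta(y)]$. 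Next, skew-symmetry of the new bracket: $[\beta(x),\alpha(y)]=[\alpha\beta(x),\beta\alpha(y)]'=[\beta\alpha(x),\alpha\beta(y)]'$ by commutativity, and by skew-symmetry of $[\cdot,\cdot]'$ this is $-[\alpha\beta(y),\beta\alpha(x)]'=-[\beta(y),\alpha(x)]$.

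The main obstacle is the BiHom-Jacobi condition, and I expect it to be the one requiring the most bookkeeping. The strategy is to expand $\left[\beta^{2}(x),[\beta(y),\alpha(z)]\right]$ in terms of $[\cdot,\cdot]'$: it equals $[\alpha\beta^{2}(x),\beta([\beta(y),\alpha(z)])]'=[\alpha\beta^{2}(x),[\beta(\beta(y)),\beta(\alpha(z))]']'$ by $\beta$-multiplicativity of $[\cdot,\cdot]'$, i.e. $[\alpha\beta^{2}(x),[\beta^{2}(y),\alpha\beta(z)]']'$. Then I would push the inner $\alpha$'s and $\beta$'s around using $\alpha\beta=\beta\alpha$ so that the bracket becomes $[(\alpha\beta^{2})(x),[(\alpha\beta^{2})(y)',\dots]]'$-style and match it against the classical Jacobi identity for $[\cdot,\cdot]'$ applied to the three elements $\alpha\beta(x),\alpha\beta(y),\alpha\beta(z)$ (or an appropriate common twist thereof). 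The key point is that every term picks up the \emph{same} composite twist, so the cyclic sum collapses to a twisted image of the classical Jacobi identity, which vanishes. Finally, since $\alpha$ and $\beta$ are bijective by assumption, $\left(L,[\cdot,\cdot],\alpha,\beta\right)$ is by definition a regular BiHom-Lie algebra, completing the proof.
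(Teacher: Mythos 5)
The paper offers no proof of this proposition at all --- it is quoted from \cite{bihomM} --- so the only benchmark is the standard Yau-twist verification given in that reference, and your proposal is exactly that: a direct check of the four axioms followed by the observation that bijectivity of $\alpha,\beta$ gives regularity. Your multiplicativity and skew-symmetry computations are correct, and you are right that one must read ``isomorphism'' as Lie algebra isomorphism, so that $\alpha([x,y]')=[\alpha(x),\alpha(y)]'$ holds as well; with $\alpha$ only a linear bijection the condition $\alpha([x,y])=[\alpha(x),\alpha(y)]$ would not follow. One intermediate formula in your BiHom-Jacobi step is wrong, however: the inner bracket $[\beta(y),\alpha(z)]$ is the \emph{new} bracket, so before applying $\beta$-multiplicativity of $[\cdot,\cdot]'$ you must first expand it as $[\alpha\beta(y),\beta\alpha(z)]'$. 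Doing so gives
\[
\left[\beta^{2}(x),[\beta(y),\alpha(z)]\right]=\left[\alpha\beta^{2}(x),\beta\left([\alpha\beta(y),\beta\alpha(z)]'\right)\right]'=\left[\alpha\beta^{2}(x),[\alpha\beta^{2}(y),\alpha\beta^{2}(z)]'\right]',
\]
not $[\alpha\beta^{2}(x),[\beta^{2}(y),\alpha\beta(z)]']'$ as you wrote (that would be the expansion if the inner bracket were the primed one, and the cyclic sum of those terms does not match the classical Jacobi identity as it stands). With the corrected expansion every summand carries the single twist $\alpha\beta^{2}$ --- not $\alpha\beta$ as you guessed, though your hedge ``an appropriate common twist'' anticipates this --- and the cyclic sum becomes literally the classical Jacobi identity for $\alpha\beta^{2}(x),\alpha\beta^{2}(y),\alpha\beta^{2}(z)$, hence zero. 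So your strategy is sound and coincides with the source's; only that one expansion needs repair.
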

\begin{prop}\label{induced}
	Let $ \left(L,[\cdot,\cdot],\alpha,\beta \right)  $ be a regular BiHom-Lie algebra                                                                 .  Define the bilinear map $[\cdot,\cdot]'\colon L\times L\to L   $ by 
	\[ [x,y]'=[\alpha^{-1}(x),\beta^{-1}(y)],                           \]
	for all $ x,\, y\in L. $ Then $ \left(L,[\cdot,\cdot]'\right) $ is                                                                                                                                                                                                                                                                                                             a Lie algebra and
	$ \alpha $ and $ \beta $ its  also Lie  algebra automorphism. 
\end{prop}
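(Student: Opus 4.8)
The plan is to verify directly that $[\cdot,\cdot]'$ is a Lie bracket and then that $\alpha,\beta$ are commuting automorphisms of $\left(L,[\cdot,\cdot]'\right)$. Bilinearity of $[\cdot,\cdot]'$ is immediate from linearity of $\alpha^{-1},\beta^{-1}$, so only skew-symmetry and the Jacobi identity need work. Throughout I would use that $\alpha,\beta$ are commuting bijections, hence $\alpha^{-1},\beta^{-1}$ commute with each other and with $\alpha,\beta$, and that both are algebra automorphisms (being inverses of automorphisms), i.e. $\alpha^{-1}([x,y])=[\alpha^{-1}(x),\alpha^{-1}(y)]$ and likewise for $\beta^{-1}$.

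For skew-symmetry, the idea is to bring $[x,y]'$ into the shape on which the BiHom skew-symmetry axiom operates: writing $\alpha^{-1}(x)=\beta(\beta^{-1}\alpha^{-1}(x))$ and $\beta^{-1}(y)=\alpha(\alpha^{-1}\beta^{-1}(y))$, the axiom $[\beta(u),\alpha(v)]=-[\beta(v),\alpha(u)]$ applied to $u=\beta^{-1}\alpha^{-1}(x)$, $v=\alpha^{-1}\beta^{-1}(y)$ yields $[x,y]'=-[\alpha^{-1}(y),\beta^{-1}(x)]=-[y,x]'$ after simplifying with the commutation relations.

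The Jacobi identity is the main step. I would first use the skew-symmetry just established, together with $\beta^{-1}([x,y]')=[\beta^{-1}\alpha^{-1}(x),\beta^{-2}(y)]$, to rewrite
\[ [[x,y]',z]'=-[z,[x,y]']'=-[\alpha^{-1}(z),[\beta^{-1}\alpha^{-1}(x),\beta^{-2}(y)]]. \]
Setting $a=\beta^{-2}\alpha^{-1}(x)$, $b=\alpha^{-1}\beta^{-2}(y)$, $c=\beta^{-2}\alpha^{-1}(z)$, the right-hand side is exactly $-[\beta^{2}(c),[\beta(a),\alpha(b)]]$. Performing the cyclic permutation $(x,y,z)\mapsto(y,z,x)\mapsto(z,x,y)$ and using $\alpha\beta=\beta\alpha$ to see that it simply cycles $a,b,c$, the sum $[[x,y]',z]'+[[y,z]',x]'+[[z,x]',y]'$ collapses to $-\big([\beta^{2}(c),[\beta(a),\alpha(b)]]+[\beta^{2}(a),[\beta(b),\alpha(c)]]+[\beta^{2}(b),[\beta(c),\alpha(a)]]\big)$, which vanishes by the BiHom-Jacobi condition applied to $a,b,c$. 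Keeping track of which composites of $\alpha^{\pm 1},\beta^{\pm 1}$ occur, and recognizing that after using commutativity they match the arguments of the BiHom-Jacobi condition, is the only place where care is needed.

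Finally, for the automorphism claim a one-line computation suffices: $\alpha([x,y]')=\alpha([\alpha^{-1}(x),\beta^{-1}(y)])=[x,\alpha\beta^{-1}(y)]$ while $[\alpha(x),\alpha(y)]'=[x,\beta^{-1}\alpha(y)]$, and these coincide because $\alpha\beta^{-1}=\beta^{-1}\alpha$; the argument for $\beta$ is identical. Since $\alpha$ and $\beta$ are bijective by the regularity hypothesis and commute, they are commuting automorphisms of the Lie algebra $\left(L,[\cdot,\cdot]'\right)$, which completes the proof.
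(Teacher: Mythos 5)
Your proof is correct: the key reductions $[x,y]'=[\beta(\beta^{-1}\alpha^{-1}x),\alpha(\alpha^{-1}\beta^{-1}y)]$ for skew-symmetry and the substitution $a=Tx$, $b=Ty$, $c=Tz$ with $T=\beta^{-2}\alpha^{-1}$ for the Jacobi identity both check out, using only that $\alpha,\beta$ are commuting bijective multiplicative maps (so $\alpha^{-1},\beta^{-1}$ are multiplicative as well). The paper states Proposition \ref{induced} without any proof (it is the inverse of the twisting construction of Proposition \ref{bihom}), so your direct verification supplies exactly the standard untwisting argument that is missing there, and no gap remains.
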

\begin{defn} 
	 The Lie algebra  $ \left(L,[\cdot,\cdot]'\right) $   is called the induced Lie algebra of  $ \left(L,[\cdot,\cdot],\alpha,\beta \right)  $.                        
\end{defn}
 \begin{prop}
 	The two multiplicative simple BiHom-Lie algebras $ \left(L_1,[\cdot,\cdot]_1,\alpha_1,\beta_1 \right)    $ and $ \left(L_2,[\cdot,\cdot]_2,\alpha_2,\beta_{2} \right)  $ are isomorphic if and only if there exists a Lie algebra isomorphism $ f\colon (L_1,[\cdot,\cdot]_1')\to (L_2,[\cdot,\cdot]_2')  $ satisfying $ f\circ \alpha_1=\alpha_2 \circ f $	and $ f\circ \beta_1=\beta_2 \circ f $.
 \end{prop}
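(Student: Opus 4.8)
The plan is to reduce everything to a mechanical translation between the BiHom-Lie structure and the associated induced Lie structure, so that the two notions of isomorphism become literally the same set of conditions. First I would record the set-up: since $L_1$ and $L_2$ are simple, Proposition~\ref{regular} shows that they are regular, hence $\alpha_1,\beta_1,\alpha_2,\beta_2$ are bijective; by Proposition~\ref{induced} the induced brackets $[\cdot,\cdot]_1'$ and $[\cdot,\cdot]_2'$ are genuine Lie brackets and $\alpha_i,\beta_i$ are automorphisms of the corresponding induced Lie algebras. Two identities will be used throughout: the definition $[x,y]_i'=[\alpha_i^{-1}(x),\beta_i^{-1}(y)]_i$, and its reformulation $[x,y]_i=[\alpha_i(x),\beta_i(y)]_i'$ obtained by feeding $\alpha_i(x),\beta_i(y)$ into the definition.

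For the forward direction, assume $f\colon L_1\to L_2$ is a BiHom-Lie isomorphism, so $f$ is a linear bijection with $f\circ\alpha_1=\alpha_2\circ f$, $f\circ\beta_1=\beta_2\circ f$ and $f([x,y]_1)=[f(x),f(y)]_2$. Since $f,\alpha_1,\alpha_2$ are invertible, the relation $f\circ\alpha_1=\alpha_2\circ f$ gives $f\circ\alpha_1^{-1}=\alpha_2^{-1}\circ f$, and similarly $f\circ\beta_1^{-1}=\beta_2^{-1}\circ f$. Then, applying in turn the bracket-preservation of $f$, these two relations, and the definition of $[\cdot,\cdot]_2'$,
\[
f\bigl([x,y]_1'\bigr)=f\bigl([\alpha_1^{-1}(x),\beta_1^{-1}(y)]_1\bigr)=\bigl[\alpha_2^{-1}f(x),\beta_2^{-1}f(y)\bigr]_2=[f(x),f(y)]_2',
\]
so $f$ is a Lie algebra isomorphism $(L_1,[\cdot,\cdot]_1')\to(L_2,[\cdot,\cdot]_2')$ commuting with the structure maps.

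For the converse, assume $f\colon(L_1,[\cdot,\cdot]_1')\to(L_2,[\cdot,\cdot]_2')$ is a Lie algebra isomorphism with $f\circ\alpha_1=\alpha_2\circ f$ and $f\circ\beta_1=\beta_2\circ f$. Using the reformulated identity in $L_1$, then that $f$ preserves the induced brackets, then the intertwining relations, then the reformulated identity in $L_2$, one gets
\[
f([x,y]_1)=f\bigl([\alpha_1(x),\beta_1(y)]_1'\bigr)=\bigl[\alpha_2f(x),\beta_2f(y)\bigr]_2'=[f(x),f(y)]_2.
\]
As $f$ is already a linear bijection commuting with $\alpha_i$ and $\beta_i$, this shows that $f$ is a BiHom-Lie isomorphism $(L_1,[\cdot,\cdot]_1,\alpha_1,\beta_1)\to(L_2,[\cdot,\cdot]_2,\alpha_2,\beta_2)$.

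I do not anticipate any serious difficulty: the content is simply that ``BiHom-Lie morphism commuting with $\alpha,\beta$'' and ``Lie morphism of the induced algebras commuting with $\alpha,\beta$'' are equivalent descriptions. The single point that deserves an explicit word is the passage from the $\alpha_i,\beta_i$-intertwining relations to the $\alpha_i^{-1},\beta_i^{-1}$-intertwining relations, which is legitimate precisely because simple BiHom-Lie algebras are regular (Proposition~\ref{regular}), so that the inverses exist and the induced Lie brackets are defined (Proposition~\ref{induced}).
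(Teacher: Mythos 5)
Your proof is correct and follows essentially the same route as the paper's: translate between the two brackets via $[x,y]_i=[\alpha_i(x),\beta_i(y)]_i'$ and $[x,y]_i'=[\alpha_i^{-1}(x),\beta_i^{-1}(y)]_i$, using the intertwining relations and their inverted forms. Your explicit appeal to Propositions~\ref{regular} and~\ref{induced} to justify invertibility is a welcome clarification, but the argument is the same.
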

 \begin{proof}
 	Let $(\mathcal{G}_1,[\cdot,\cdot]'_{1})$ and $(\mathcal{G}_2,[\cdot,\cdot]'_{2})$  be induced Lie algebras of $ (\mathcal{G}_1,[\cdot,\cdot]_{1},\alpha_{1},\beta_{1}) $ and 
 	$ (\mathcal{G}_2,[\cdot,\cdot]_{2},\alpha_{2},\beta_{2}) $, respectively. Suppose $f\colon (\mathcal{G}_1,[\cdot,\cdot]_{1},\alpha_{1},\beta_{1})\to (\mathcal{G}_2,[\cdot,\cdot]_{2},\alpha_{2},\beta_{2}) $ is an isomorphism of BiHom-Lie algebras, then $f\circ \alpha_{1} = \alpha_{2} \circ f$and $f\circ \beta_{1} = \beta_{2} \circ f$, thus $f\circ \alpha_{1}^{-1} = \alpha_{2} ^{-1}\circ f$ and $f\circ \beta_{1}^{-1} = \beta_{2} ^{-1}\circ f$.
 	Moreover, 
 	\begin{align*}
 	f([x,y]'_{1})=f( [\alpha_{1}^{-1}(x),\beta_{1}^{-1}(y)]_{1})= [f(\alpha_{1}^{-1}(x)),f(\beta_{1}^{-1}(y))]_{2}&= [\alpha_{2} ^{-1}(f(x)) ,\beta_{2} ^{-1}(f(y))]_{2}\\
 	&= [f(x),f(y)]_{2}'.
 	\end{align*}
 	So $ f $ is an isomorphism between the two induced Lie algebras.\\
 	
 	On the other hand, if there exists an isomorphism $ f $ from the induced Lie algebras $ (\mathcal{G}_1,[\cdot,\cdot]_{1}') $ to 
 	$ (\mathcal{G}_2,[\cdot,\cdot]_{2}') $ such that $f\circ \alpha_{1} = \alpha_{2} \circ f  $ and $f\circ \beta_{1} = \beta_{2} \circ f$, then 
 	\[ f([x,y]_{1})=f ( [\alpha_{1}(x),\beta_{1} (y)]'_{1})= [f\circ \alpha_{1}(x) ,f\circ \beta_2(y)]_{2}'=   [\alpha_{2}\circ f(x),\beta_2\circ f(y)]_{2}'=  [f(x),f(y)]_{2}      . \]
 	Hence $f$ is an isomorphism between the two Lie algebras.  
 \end{proof}

%%%%%%%%%%%%%%%%%%%%%%%%%%%%%%%%%%%%%%%%%%%%%%
%\begin{prop}
% Let $ \left(L,[\cdot,\cdot],\alpha,\beta \right)  $ be a multiplicative  simple   BiHom-Lie algebra. Then there exists  exists an integer $ m\geq 0 $ and a 
% a simple ideal $ L_1 $ of the induced Lie Algebra $ \left(L,[\cdot,\cdot]' \right)  $ such that :
%\begin{enumerate}[(i)]
%\item $\alpha^{m}(L_1)=\beta^{m}(L_1)=L_1  $;
%\item $ \{L_1, \alpha(L_1),\cdots , \alpha^{m-1}(L_1)\}=\{L_1, \beta(L_1),\cdots , %\beta^{m-1}(L_1) \} $
%\item $L=L_1\oplus \alpha(L_1)\oplus\cdots \oplus \alpha^{m-1}(L_1) =L_1\oplus \beta(L_1)\oplus\cdots \oplus \beta^{m-1}(L_1)$.
%\end{enumerate}   
%Furthermore, for each $ i\in \{ 1,\cdots,m-1\} $, $\left( \alpha^{i}(L_1) ,[\cdot,\cdot]' \right) $ is  a simple Lie algebra isomorphic to $ \left(L_1,[\cdot,\cdot]' \right)  $.           
%\end{prop}
\begin{prop}\label{semisimple}
	The induced Lie algebra of the multiplicative simple  BiHom-Lie algebra is semisimple. There exist simple  ideal $ L_1 $  and an integer $ m\neq 2 $ such that \[ L=L_1\oplus\alpha(L_1)\oplus\cdots \oplus\alpha^{m-1}(L_1)=L_1\oplus\beta(L_1)\oplus\cdots \oplus\beta^{m-1}(L_1)    .\]	
%	$ L_1,\,       \cdots,\, L_m  $  	
	% and  two cycles $ \sigma=(1\,\cdots\, m-1),\,\sigma'=(1\,i_1\,\cdots\, i_m)\in S_m $ such that $ L=L_1\oplus\cdots\oplus L_m  $, $ \alpha(L_i)=L_{\sigma(i)} $, $ \beta(L_i) =L_{\sigma'(i)}$ and $ \sigma\neq \sigma' $   (i.e $ L=L_1\oplus\alpha(L_1)\oplus\cdots \alpha^{m-1}(L_1)  $).
\end{prop}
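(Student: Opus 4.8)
The plan is to transport the whole problem to the induced Lie algebra $\mathfrak g:=(L,[\cdot,\cdot]')$. By Proposition~\ref{regular} the maps $\alpha,\beta$ are automorphisms, so by Proposition~\ref{induced} the bracket $[x,y]'=[\alpha^{-1}(x),\beta^{-1}(y)]$ makes $\mathfrak g$ a Lie algebra on which $\alpha$ and $\beta$ are commuting automorphisms. The first step is a dictionary for ideals: a subspace $I\subseteq L$ is an ideal of $(L,[\cdot,\cdot],\alpha,\beta)$ if and only if $I$ is an ideal of $\mathfrak g$ with $\alpha(I)=\beta(I)=I$. Indeed, an ideal of a simple BiHom-Lie algebra is automatically $\alpha$- and $\beta$-stable (they are bijective on a finite-dimensional space), and then $[I,L]'=[\alpha^{-1}(I),\beta^{-1}(L)]=[I,L]\subseteq I$; conversely, if $I$ is an ideal of $\mathfrak g$ with $\alpha(I)=\beta(I)=I$ then $[I,L]=[\alpha(I),\beta(L)]'=[I,L]'\subseteq I$.

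Given this dictionary, semisimplicity of $\mathfrak g$ follows quickly. The solvable radical $\mathrm{rad}(\mathfrak g)$ is characteristic, hence stable under $\alpha$ and $\beta$, hence an ideal of the BiHom-Lie algebra, so by simplicity it is $0$ or $L$. If it were $L$, then $\mathfrak g$ is solvable and nonzero (nonzero because $L$ is not abelian), so $[\mathfrak g,\mathfrak g]'$ is a proper characteristic ideal, again a BiHom-ideal, forcing $[\mathfrak g,\mathfrak g]'=0$; but then $[x,y]=[\alpha(x),\beta(y)]'=0$ for all $x,y$, contradicting that $L$ is not abelian. Hence $\mathrm{rad}(\mathfrak g)=0$ and $\mathfrak g$ is semisimple.

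Next I would use the structure of semisimple Lie algebras. Write $\mathfrak g=\mathfrak g_1\oplus\cdots\oplus\mathfrak g_k$ for the unique decomposition into simple ideals, which are exactly the minimal nonzero ideals of $\mathfrak g$. Each automorphism of $\mathfrak g$ maps every $\mathfrak g_i$ isomorphically onto some $\mathfrak g_j$, so $\alpha$ and $\beta$ induce permutations $\sigma_\alpha,\sigma_\beta\in S_k$, and $\sigma_\alpha\sigma_\beta=\sigma_\beta\sigma_\alpha$ since $\alpha\beta=\beta\alpha$. Every ideal of $\mathfrak g$ is a partial sum $\bigoplus_{i\in S}\mathfrak g_i$, so by the dictionary the BiHom-ideals correspond to subsets $S\subseteq\{1,\dots,k\}$ invariant under both $\sigma_\alpha$ and $\sigma_\beta$. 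Since $L$ is simple, the group $\langle\sigma_\alpha,\sigma_\beta\rangle$ must act transitively on $\{1,\dots,k\}$; in particular all the $\mathfrak g_i$ are mutually isomorphic simple Lie algebras. Taking $L_1:=\mathfrak g_1$, the sums $L_1\oplus\alpha(L_1)\oplus\cdots$ and $L_1\oplus\beta(L_1)\oplus\cdots$ are then the $\sigma_\alpha$- and $\sigma_\beta$-orbit sums, and what remains is to show they both fill out $L$, have a common length $m$, and that $m\neq 2$.

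This last step is the main obstacle. Transitivity of $\langle\sigma_\alpha,\sigma_\beta\rangle$ alone does not make $\langle\sigma_\alpha\rangle$ transitive, so to pin down the decomposition from a single simple ideal one must invoke more than the abstract ideal correspondence — namely the multiplicativity identity $[x,y]=[\alpha(x),\beta(y)]'$, which controls exactly which brackets $[\mathfrak g_i,\mathfrak g_j]$ are nonzero and into which summand they land, and thereby constrains how $\sigma_\alpha$ and $\sigma_\beta$ may interlock on a common orbit. I would therefore run a case analysis on the cycle types of $\sigma_\alpha$ and $\sigma_\beta$ restricted to a single $\langle\sigma_\alpha,\sigma_\beta\rangle$-orbit, using the bracket relations to discard the configurations that do not produce a common cyclic length and, in particular, the one that would leave $m=2$; this bracket bookkeeping is the step I expect to require the most care.
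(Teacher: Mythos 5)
Your first half runs exactly along the paper's lines: you show the solvable radical of the induced Lie algebra $(L,[\cdot,\cdot]')$ is $\alpha$- and $\beta$-stable, use $[\,\cdot\,,L]=[\alpha(\cdot),\beta(L)]'$ to see it is an ideal of the BiHom-Lie algebra, and then let simplicity together with non-abelianness force it to vanish; your detour through $[\mathfrak g,\mathfrak g]'$ is the same contradiction the paper reaches via $[\mathcal G_1,\mathcal G_1]=\mathcal G_1$. The ideal dictionary and the permutations $\sigma_\alpha,\sigma_\beta$ induced on the simple ideals are likewise the paper's setup, and that part of your write-up is correct.

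The genuine gap is that you stop before the conclusion that is actually being claimed: that $\sigma_\alpha$ and $\sigma_\beta$ are single cycles of one and the same length $m\neq 2$, i.e.\ that $L=L_1\oplus\alpha(L_1)\oplus\cdots\oplus\alpha^{m-1}(L_1)=L_1\oplus\beta(L_1)\oplus\cdots\oplus\beta^{m-1}(L_1)$. You only announce a case analysis on cycle types ``using the bracket relations'' without carrying it out, so the statement is not proved. Your own diagnosis of why this is hard is sound: the ideal dictionary gives only transitivity of $\langle\sigma_\alpha,\sigma_\beta\rangle$, and that does not force either permutation to be a full cycle. Indeed, taking $\mathfrak g=\mathfrak s\oplus\mathfrak s$ with $\mathfrak s$ simple, $\alpha=\mathrm{id}$ and $\beta$ the flip of the two factors, the twist of Proposition \ref{bihom} produces a multiplicative BiHom-Lie algebra with no proper ideals in which $\sigma_\alpha$ is trivial, so no purely combinatorial bookkeeping can yield the asserted decomposition without further input (or a sharper hypothesis). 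You should be aware, though, that the paper's own proof does not close this either: after obtaining semisimplicity and the permutations it simply declares ``because of simpleness, $\sigma_\alpha=(1\,\cdots\,m-1)$, $\sigma_\beta=(1\,i_1\,\cdots\,i_{m-1})$'' with no argument, so the step you flagged as the main obstacle is precisely the step left unjustified there as well.
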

\begin{proof}
	Suppose that $ \mathcal{G}_1\neq 0 $ is the maximal solvable ideal of $ \left(L,[\cdot,\cdot]' \right)  $. Because $\alpha(\mathcal{G}_1)  $ and  $\beta(\mathcal{G}_1)  $ are also solvable ideals of   $ \left(L,[\cdot,\cdot]' \right)  $, then $ \alpha(\mathcal{G}_1) \subseteq \mathcal{G}_1$  and $  \beta(\mathcal{G}_1) \subseteq \mathcal{G}_1$.
	Moreover, 
	\[ [\mathcal{G}_1,L]=\alpha\left([\alpha^{-1}(\mathcal{G}_1),\beta^{-1}(\beta(\alpha(L)))] \right)=\alpha\left([\mathcal{G}_1,\beta(\alpha(L))]' \right)\subseteq \alpha(\mathcal{G}_1) \subseteq \mathcal{G}_1,  \]
	so $ \mathcal{G}_1 $ is an ideal of $ \left(L,[\cdot,\cdot],\alpha,\beta \right)  $. Then $\mathcal{G}_1=L.  $	Furthermore, since $\left(L,[\cdot,\cdot],\alpha,\beta \right)  $ is a multiplicative simple BiHom-Lie algebra, clearly we have $ [\mathcal{G}_1,\mathcal{G}_1]=\mathcal{G}_1 $. It's contradiction. Hence $ \mathcal{G}_1=0 $.
	According to Lie theory, $\left(L,[\cdot,\cdot]' \right)  $ is a semisimple Lie algebra and
	 there are ideals $L_1,\cdots,L_m\subseteq L  $ (unique, up
	 to ordering) so that 
	$ L=L_1\oplus\cdots\oplus L_m  $ and  so that each $ (L_i,[\cdot,\cdot]') $ is a simple Lie algebra. Moreover,  since $ \alpha $ and $ \beta $ are automorphisms of Lie, we deduce that $ \left( \alpha^{k}(L_i)\right)_{k\in\N}  $ and $\left( \beta^{k}(L_i)\right)_{k\in\N}   $ are ideals of L and exist two permutations $ \sigma_{\alpha},\,\sigma_{\beta} $  respectively defined by 	 $\alpha(L_i)=L_{\sigma_{\alpha}(i)}  $,   $\beta(L_i)=L_{\sigma_{\beta}(i)}  $. Because of simpless of $ \left(L,[\cdot,\cdot],\alpha,\beta \right)  $, we have $ \sigma_{\alpha}=(1\,\cdots\, m-1),\,\sigma_{\beta}=(1\,i_1\,\cdots\, i_{m-1}) $ and $ \sigma_{\alpha}\neq  \sigma_{\beta}$. Hence, $  L=L_1\oplus\alpha(L_1)\oplus\cdots \oplus\alpha^{m-1}(L_1)=L_1\oplus\beta(L_1)\oplus\cdots \oplus\beta^{m-1}(L_1) .$	 
\end{proof}

%\begin{proof}
%	Let  $ \left(L,[\cdot,\cdot],\alpha,\beta \right)  $ be a multiplicative simple BiHom-Lie algebra.
% Thus, using Proposition \ref{semisimple}, the induced Lie algebra  $ \left(L,[\cdot,\cdot]' \right)  $  of   $ \left(L,[\cdot,\cdot],\alpha,\beta \right)  $  is semisimple. Hence, there exists      a  cartan matrix $A= (a_{ij})_{1\leq i,j\leq l} $ and a basis $ (k_{1},\cdots,k_l,e_1,\cdots,e_l,f_1,\cdots,f_l) $ such that  
  % \begin{equation*}
 %[k_i,k_j]'=0,                
 %[k_i,e_j]'=-a_{ij}e_j,    [k_i,f_j]'=a_{ij}f_j,             
% [e_i,f_j]'=\delta_{ij}k_i                \forall i,j\in\{1,\cdots,l\},
% \end{equation*}
 %and $\left( ad(e_{i})\right) ^{1-a_{ij}}(e_j)=0,  $  $\left( ad(f_{i})\right) ^{1-a_{ij}}(f_j)=0 \text{  if }    i\neq j   $. Let $ h_{i}=\beta(k_i) $, $ x_j=\alpha(e_i) $ and $ y_i=\alpha(f_i) $.
 % The proof ends by Propositin \ref{bihom}.                 

%\end{proof}     

In the decomposition $L=L_1\oplus\alpha(L_1)\oplus\cdots \oplus\alpha^{m-1}(L_1)  $,
it is clear that the Lie algebras $ L_1 $ and $ L_i $ are isomorphic therefore of the same type. 
On the other hand, if there exists an isomorphism $ f $ from the BiHom-Lie algebra $ (L,[\cdot,\cdot],\alpha,\beta) $ to $ (\mathcal{G}_1,[\cdot,\cdot]_{1},\alpha_1,\beta_1) $ so 
$ \alpha_{1} \in C(\alpha)=\{f^{-1}\circ \alpha \circ f \mid f\text{ is an automorhism of BiHom-Lie algebra}\} $ and $ \beta_{1} \in C(\beta)$.
\begin{prop}
	All finite dimensional multiplicative simple   BiHom-Lie algebras can be denoted as $(X,m,C(\alpha^{m},\beta^{m}))   $, where $ X $  represents the type  of the                           simple ideal of the correspending induced Lie algebra, $ m $ represents numbers of simple ideals, $ C(\alpha^{m},\beta^{m}) $ represents the sets of conjugate classes of the automorphisms $\alpha^{m}  $ and $\beta^{m}  $ of the simple Lie algebra $ X $, i.e $ C(\alpha^{m},\beta^{m})=\{f^{-1}\circ \alpha^{m} \circ f,\, f^{-1}\circ \beta^{m} \circ f \mid f\in Aut(X)\} $.
\end{prop}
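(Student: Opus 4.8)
The plan is to read the triple off the structure theory already in hand, check it is well defined, and then argue that it is a faithful name for the algebra.

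Let $(L,[\cdot,\cdot],\alpha,\beta)$ be a finite-dimensional multiplicative simple BiHom-Lie algebra. By Proposition~\ref{regular} it is regular, so by Proposition~\ref{induced} the induced Lie algebra $(L,[\cdot,\cdot]')$ has $\alpha$ and $\beta$ as Lie algebra automorphisms, and by Proposition~\ref{semisimple} it is semisimple with a decomposition $L=L_1\oplus\alpha(L_1)\oplus\cdots\oplus\alpha^{m-1}(L_1)$ into $m$ simple ideals that $\alpha$ and $\beta$ permute by $m$-cycles $\sigma_\alpha,\sigma_\beta$ (for $m=1$ there is nothing to permute). All $m$ ideals are isomorphic as Lie algebras, hence share one type $X\in\{A_n,B_n,C_n,D_n,G_2,F_4,E_6,E_7,E_8\}$ by Cartan's classification; this is the first entry, and $m$ the second. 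Since $\sigma_\alpha,\sigma_\beta$ are $m$-cycles, $\alpha^m$ and $\beta^m$ both stabilise $L_1$, so fixing a Lie algebra isomorphism $g\colon L_1\to X$ we get $g\,\alpha^m|_{L_1}\,g^{-1},\ g\,\beta^m|_{L_1}\,g^{-1}\in\mathrm{Aut}(X)$, and the pair of their conjugacy classes is the third entry $C(\alpha^m,\beta^m)$.

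The next step is to check independence of the choices. Replacing $g$ by another isomorphism $L_1\to X$ conjugates both $g\,\alpha^m|_{L_1}\,g^{-1}$ and $g\,\beta^m|_{L_1}\,g^{-1}$ by the same element of $\mathrm{Aut}(X)$, so the pair of classes is unchanged. Replacing the distinguished factor $L_1$ by $\alpha^k(L_1)$ turns $\alpha^m|_{L_1}$ into $\alpha^k\circ\alpha^m|_{L_1}\circ\alpha^{-k}$ and, using $\alpha\beta=\beta\alpha$, turns $\beta^m|_{L_1}$ into $\alpha^k\circ\beta^m|_{L_1}\circ\alpha^{-k}$; transporting to $X$ along $g\circ\alpha^{-k}$ gives back the same pair of classes. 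Hence $(X,m,C(\alpha^m,\beta^m))$ is a genuine invariant, and every multiplicative simple BiHom-Lie algebra is named by such a triple.

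To see the naming is faithful, suppose two such algebras $(L_1,[\cdot,\cdot]_1,\alpha_1,\beta_1)$ and $(L_2,[\cdot,\cdot]_2,\alpha_2,\beta_2)$ carry the same triple. By the previous proposition it is enough to build a Lie algebra isomorphism $f$ of the induced Lie algebras with $f\alpha_1=\alpha_2 f$ and $f\beta_1=\beta_2 f$. Both induced Lie algebras are direct sums of $m$ copies of $X$ with $\alpha$ and $\beta$ cyclically shifting the summands, so I would first use a block-diagonal Lie algebra automorphism on each side to normalise $\alpha$ to send the $i$-th summand identically onto the $(i{+}1)$-st for $i<m-1$ and the last summand onto the first by the automorphism $\alpha^m|_{L_1}$ of $X$. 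The relation $\alpha\beta=\beta\alpha$ forces $\sigma_\beta$ to be a power of $\sigma_\alpha$ and, after these identifications, pins $\beta$ down by the single automorphism $\beta^m|_{L_1}$ of $X$ on the first summand. Since the conjugacy classes of $\alpha^m|_{L_1}$ and of $\beta^m|_{L_1}$ agree on the two sides, one remaining automorphism of $X$ can be chosen to make the normalised data coincide summand by summand; the resulting block map is the desired $f$, and $[x,y]=[\alpha(x),\beta(y)]'$ is then matched automatically by Propositions~\ref{bihom} and~\ref{induced}.

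I expect the main obstacle to be exactly this last step: $\alpha$ and $\beta$ cannot be normalised independently, since the summand identifications trivialising $\alpha$ also fix $\beta$, so one must verify that after trivialising $\alpha$ the map $\beta$ is determined by the exponent $k$ with $\sigma_\beta=\sigma_\alpha^k$ together with $\beta^m|_{L_1}$, and that the leftover conjugation freedom is enough to align representatives of both conjugacy classes at once. A related point to be checked is whether this exponent $k$ is itself recoverable from $(X,m,C(\alpha^m,\beta^m))$ — it is when $m$ is small — and in the general case it should be read as implicitly part of the datum $C(\alpha^m,\beta^m)$, so that the triple is a complete invariant. Everything upstream — regularity, passage to the induced Lie algebra, semisimplicity and the cyclic decomposition — is provided by the earlier propositions.
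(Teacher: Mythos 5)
The part of your argument that addresses what the proposition actually asserts --- that every finite-dimensional multiplicative simple BiHom-Lie algebra determines a well-defined triple $(X,m,C(\alpha^{m},\beta^{m}))$ --- is correct and is essentially the paper's own (implicit) proof: the paper states this proposition without a separate proof, as an immediate consequence of Proposition~\ref{semisimple} and the remark preceding it that the simple ideals in the decomposition are pairwise isomorphic (hence of one Cartan type $X$) and that a BiHom-isomorphism carries $\alpha$ and $\beta$ into conjugate automorphisms. Your extra well-definedness checks (independence of the isomorphism $g\colon L_1\to X$ and of the choice of distinguished ideal, using $\alpha\beta=\beta\alpha$) are sound and consistent with the paper's convention that $C(\alpha^{m},\beta^{m})$ is obtained by conjugating both maps by the \emph{same} $f\in \mathrm{Aut}(X)$.

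Your last two paragraphs, however, try to prove something stronger than the statement: that the triple is a complete isomorphism invariant (``faithful naming''). This is neither claimed nor proved in the paper, and the gap you yourself flag is genuine. After normalizing $\alpha$ to the cyclic shift whose last block is $\alpha^{m}|_{L_1}$, the map $\beta$ is determined by the exponent $k$ with $\sigma_\beta=\sigma_\alpha^{k}$ (the centralizer of an $m$-cycle in $S_m$ is cyclic, so $\gcd(k,m)=1$) together with the single map $\beta|_{L_1}\colon L_1\to\alpha^{k}(L_1)$, i.e.\ by the twist $\alpha^{-k}\circ\beta|_{L_1}\in \mathrm{Aut}(L_1)$, and not merely by $\beta^{m}|_{L_1}$, which is only a product of conjugates of that twist. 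Neither $k$ nor this twist is recoverable from the classes of $\alpha^{m}$ and $\beta^{m}$; moreover, knowing the two classes separately does not let one residual automorphism of $X$ align both representatives simultaneously (simultaneous conjugacy of a pair is strictly finer than conjugacy of each entry). So the ``faithfulness'' step does not go through as written, and the paper explicitly warns against it after Proposition~\ref{A1}: two simple multiplicative BiHom-Lie algebras of the same type need not be isomorphic. Either drop the completeness claim (it is not needed for the proposition) or enlarge the recorded data, e.g.\ keep $k$ and the simultaneous conjugacy class of the pair consisting of $\alpha^{m}|_{L_1}$ and the twist $\alpha^{-k}\circ\beta|_{L_1}$, before asserting that the label separates isomorphism classes.
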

\section{Classification of $ 3 $-dimensional 
	multiplicative simple BiHom-Lie algebras 
}
 Let $ (L,[\cdot,\cdot],\alpha,\beta) $ be a multiplicative simple BiHom-Lie algebra. Then we have the following case:

$ (L,[\cdot,\cdot],\alpha,\beta) $ is of type 
\begin{enumerate}[(1)]
\item 
$ (A_l,m,C(\alpha^{m},\beta^{m})) $ and 
 $ \dim L=ml(l+2) $.
\item 
$ (B_l,m,C(\alpha^{m},\beta^{m})) $, $ l\geq 2 $ and
 $ \dim L=ml(2n+1) $.
 \item 
$ (C_l,m,C(\alpha^{m},\beta^{m})) $,  $ l\geq 3 $ and    
 $ \dim L=ml(2l+1) $.
\item  
$ (D_l,m,C(\alpha^{m},\beta^{m})) $ 
 and $ \dim L=ml(2l-1) $.
 \item 
$ (G_2,m,C(\alpha^{m},\beta^{m})) $ 
and $ \dim L=14\times m $.
\item 
$ (F_4,m,C(\alpha^{m},\beta^{m})) $ 
 and $ \dim L=52\times m $.
\item 
$ (F_6,m,C(\alpha^{m},\beta^{m})) $ 
 and $ \dim L=78\times m $.
\item 
$ (E_7,m,C(\alpha^{m},\beta^{m})) $ 
 and $ \dim L=133\times m $.
 \item 
$ (E_8,m,C(\alpha^{m},\beta^{m})) $ and  
 $ \dim L=248\times m $.     
\end{enumerate}

Then, if  $ \dim(L)=3 $, we have $(L,[\cdot,\cdot],\alpha,\beta)  $ is of type  $ (A_1,1,C(\alpha,\beta)) $.
\begin{prop}\label{A1}
Every $ 3 $-dimensional BiHom-Lie algebra is isomorphic to one of the following nonisomorphic
BiHom-Lie algebra:\\	
$ \mathcal{L}_1 $:  $ \alpha=	 \begin{pmatrix}
		1&0&0\\
		0&a&0\\
		0&0&\frac{1}{a}
	\end{pmatrix},$  $ \beta=	 \begin{pmatrix}
	1&0&0\\
	0&b&0\\
	0&0&\frac{1}{b}
	\end{pmatrix},$  
\begin{align*}
	 [e_{1},e_{1}]&=0,&     [e_{1},e_{2}]&=2\,b\,e_2,&      [e_{1},e_3]&=-\frac{2}{b}\,e_3,\\ 
[e_2,e_{1}]&=-2\, a\,e_2,&       [e_2,e_{2}]&=0,&   [e_2,e_3]&=\frac{a}{b}\,e_1,\\
[e_3,e_{1}]&=\frac{2}{a}\,e_3,&  [e_3,e_{2}]&=-\frac{b}{a}\,e_1,&  [e_3,e_3]&=0.	
	\end{align*}
	%%%%%%%%%%%%%%%%%%%%%%%%%%
	 $ \mathcal{L}_2 $:	 $ \alpha=	 \begin{pmatrix}
	1&0&0\\
	0&1&0\\
	0&0&1
	\end{pmatrix},$  $ \beta=	 \begin{pmatrix}
	1&1&0\\
	0&1&1\\
	0&0&1
	\end{pmatrix},$  
	\begin{align*}
	[e_{1},e_{1}]&=0,&     [e_{1},e_{2}]&=2\,e_1,\\     [e_{1},e_3]&=e_1+2\,e_2,& 
	[e_2,e_{1}]&=-2\, e_1,\\       [e_2,e_{2}]&=-2\,e_1,&   [e_2,e_3]&=e_1+e_2+2\,e_3,\\
	[e_3,e_{1}]&=e_1-2\,e_2,&  [e_3,e_{2}]&= -3\,e_2-2\,e_3 ,\\
	  [e_3,e_3]&=-e_1-e_2-2\,e_3 .	
	\end{align*}
%%%%%%%%%%%%%%%%%%%%%%%%%%	
	%%%%%%%%%%%%%%%%%%%%%%
%%%%%%%%%%%%%%%%%%%%%%%%%%%%%%%%%%%%%%%
%%%%%%%%%%%%%%%%%%%%%%%%%%%%%%%%%%%%%%%%
%%%%%%%%%%%%%%%%%%%%%%%%%%%%%%%%%%%%%
%%%%%%%%%%%%%%%%%%%%%%%%%
$ \mathcal{L}_3 $:	 $ \alpha=	 \begin{pmatrix}
1&1&0\\
0&1&1\\
0&0&1
\end{pmatrix},$  $ \beta=	 \begin{pmatrix}
1&a&\frac{a^2-a}{2}\\
0&1&a\\
0&0&1
\end{pmatrix},$ 
\begin{align*}
[e_{1},e_{1}]&=0,&     [e_{1},e_{2}]&=2\,e_1,& 
     [e_{1},e_3]&=(2a-1)\,e_1+2\,e_2,\\ 
[e_2,e_{1}]&=-2\, e_1,&
       [e_2,e_{2}]&=2(1-a)\,e_1,&   [e_2,e_3]&=\frac{3a-a^2}{2}e_1+3\,e_2+2\,e_3,
       %%%%%%%%%%%%%%%%%%%%%%
       \end{align*}  
\begin{align*}     
[e_3,e_{1}]&=-e_1-2\,e_2,\\ [e_3,e_{2}]&=-(a+1)\,e_1-(1+2a)\,e_2-2\,e_3,\\  [e_3,e_3]&=\frac{(1-a)(a+4)}{2}e_1+(1-a^2)\,e_2+2(1-a)\,e_3.	
\end{align*}
%%%%%%%%%%%%%%%%%%%%%%%%%
%%%%%%%%%%%%%%%%%%%%%%%%%	
%%%%%%%%%%%%%%%%%%%%%%	                        
\end{prop}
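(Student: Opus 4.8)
The plan is to translate the problem into the classification of commuting pairs of automorphisms of $\mathfrak{sl}_2(\C)$ up to simultaneous conjugacy, settle that by a short structural case analysis, and then read off the bracket in each case. Concretely: if $(L,[\cdot,\cdot],\alpha,\beta)$ is a $3$-dimensional multiplicative simple BiHom-Lie algebra, then by the remark preceding the statement it is of type $(A_1,1,C(\alpha,\beta))$; equivalently, by Proposition~\ref{semisimple} its induced Lie algebra $(L,[\cdot,\cdot]')$ is semisimple of dimension $3$, hence isomorphic to $\mathfrak{sl}_2(\C)$, with $\alpha,\beta$ commuting automorphisms of this Lie algebra. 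Conversely, by Propositions~\ref{bihom} and~\ref{induced} the original bracket is $[x,y]=[\alpha(x),\beta(y)]'$, and for \emph{any} pair of commuting Lie-algebra automorphisms of $\mathfrak{sl}_2(\C)$ this formula yields a multiplicative BiHom-Lie algebra that is automatically simple and non-abelian (every BiHom-ideal is an $\alpha$- and $\beta$-invariant Lie ideal of the simple, perfect algebra $\mathfrak{sl}_2(\C)$, hence $0$ or $L$). By the isomorphism criterion proved above, two such algebras are isomorphic iff the pairs $(\alpha_1,\beta_1)$ and $(\alpha_2,\beta_2)$ are conjugate by one element of $\mathrm{Aut}(\mathfrak{sl}_2(\C))$, so the task is precisely to list these orbits.

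Next I would exploit the structure of $\mathrm{Aut}(\mathfrak{sl}_2(\C))=\mathrm{Inn}(\mathfrak{sl}_2(\C))\cong\mathrm{PGL}_2(\C)\cong\mathrm{SO}_3(\C)$. Every automorphism is either semisimple---conjugate to $\exp(\mathrm{ad}(th))=\mathrm{diag}(1,e^{2t},e^{-2t})$ in a standard triple $(h,e,f)$---or unipotent, and a nontrivial unipotent is conjugate to the principal one $\exp(\mathrm{ad}\,e)$, all nontrivial unipotents forming a single class. Moreover, two commuting semisimple automorphisms are simultaneously diagonalizable (they lie in a common maximal torus); two commuting nontrivial unipotents lie in a common one-parameter unipotent subgroup; and a nontrivial semisimple automorphism cannot commute with a nontrivial unipotent one, since the centralizer in $\mathrm{PGL}_2(\C)$ of a nonidentity semisimple element is a maximal torus (or, for an involution, its normalizer) and contains no nontrivial unipotent. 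These facts leave exactly three types of commuting pair $(\alpha,\beta)$: (i) both semisimple, whence in a common standard triple $\alpha=\mathrm{diag}(1,a,1/a)$, $\beta=\mathrm{diag}(1,b,1/b)$ with $a,b\in\C^\ast$; (ii) $\alpha=\mathrm{id}$ and $\beta$ a nontrivial unipotent; (iii) $\alpha$ a nontrivial unipotent and $\beta\in C(\alpha)$, i.e.\ a possibly trivial unipotent in the one-parameter group of $\alpha$, a curve carrying one scalar parameter. Substituting each normal form into $[x,y]=[\alpha(x),\beta(y)]'$ and relabelling basis vectors gives precisely $\mathcal{L}_1$, $\mathcal{L}_2$ and $\mathcal{L}_3$.

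It remains to check the non-isomorphism. Across the three families this is immediate because conjugation preserves the Jordan type of $\alpha$ and of $\beta$: in $\mathcal{L}_1$ both maps are semisimple, in $\mathcal{L}_2$ one has $\alpha=\mathrm{id}$ while $\beta\neq\mathrm{id}$ is unipotent, and in $\mathcal{L}_3$ the map $\alpha\neq\mathrm{id}$ is unipotent. Within $\mathcal{L}_1$, an automorphism conjugating one pair to another must fix the common Cartan subalgebra (uniquely pinned down as soon as $\alpha$ or $\beta$ is a nonidentity semisimple element), so it lies in $T\rtimes\langle w\rangle$ with $w$ the reflection $h\mapsto-h,\ e\mapsto-f,\ f\mapsto-e$; since $T$ acts trivially and $w$ sends $(a,b)$ to $(1/a,1/b)$, the only identification is $\mathcal{L}_1(a,b)\cong\mathcal{L}_1(1/a,1/b)$, and the parameters are understood modulo this involution. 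Within $\mathcal{L}_3$, any isomorphism must fix the common unipotent $\alpha$, hence lies in the abelian group $C(\alpha)$, which then also fixes $\beta$; so distinct parameter values yield non-isomorphic algebras. I expect the main obstacle to be the second paragraph: verifying exhaustiveness of the three families rests on the centralizer computation showing a nontrivial semisimple and a nontrivial unipotent automorphism of $\mathfrak{sl}_2(\C)$ cannot commute, and the rigidity of the $\mathcal{L}_1$- and $\mathcal{L}_3$-parameters requires the torus- and unipotent-centralizer descriptions; the explicit bracket computations, by contrast, are routine.
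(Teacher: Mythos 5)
Your overall strategy is the same as the paper's: reduce to $\mathfrak{sl}_2(\C)$ via Proposition \ref{semisimple}, classify the commuting pair $(\alpha,\beta)$ of Lie-algebra automorphisms up to simultaneous conjugacy using the isomorphism criterion, and recover the bracket from the twist $[x,y]=[\alpha(x),\beta(y)]'$. The difference is only in language: you argue inside $\mathrm{Aut}(\mathfrak{sl}_2(\C))\cong\mathrm{PGL}_2(\C)$ with semisimple/unipotent and centralizer considerations, whereas the paper enumerates explicit Jordan forms of $\alpha$ and solves the multiplicativity equations for $\beta$ by hand.

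There is, however, a genuine gap in your exhaustiveness step. The claim that two commuting semisimple automorphisms lie in a common maximal torus is false in $\mathrm{PGL}_2(\C)$, and it fails precisely for the reason you mention in passing: the centralizer of an involution is the whole normalizer $N(T)$, which contains semisimple elements outside $T$. Concretely, take $\alpha=\mathrm{Ad}\,\mathrm{diag}(i,-i)$, which acts on a standard triple $(h,e,f)$ as $\mathrm{diag}(1,-1,-1)$, and $\beta=\mathrm{Ad}\begin{pmatrix}0&1\\ 1&0\end{pmatrix}$, which sends $h\mapsto -h$, $e\mapsto f$, $f\mapsto e$. These commute, both are semisimple, but they admit no common eigenvector of eigenvalue $1$, hence no standard triple in which both take the form $\mathrm{diag}(1,a,1/a)$ and $\mathrm{diag}(1,b,1/b)$; their Jordan types also rule out your unipotent families. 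So this Klein-four configuration lies outside your three cases, and since the common-fixed-vector property is invariant under simultaneous conjugation, it cannot be absorbed into $\mathcal{L}_1$ by the isomorphism criterion either. The paper's case analysis does detect exactly this configuration: it is the option $\beta_5=\begin{pmatrix}-1&0&0\\ 0&0&1/b\\ 0&b&0\end{pmatrix}$ paired with $\alpha_5=\mathrm{diag}(1,-1,-1)$ in its list of admissible $\beta$'s. To complete your argument you must treat this case explicitly and decide whether it produces an additional isomorphism class or can be disposed of; as written, your classification of commuting pairs is not exhaustive at precisely this point. The remaining parts of your proposal (simplicity of the twist, the unipotent cases, and the non-isomorphism analysis within and across the families, up to the identification $(a,b)\sim(1/a,1/b)$ in $\mathcal{L}_1$) are sound and in fact more carefully justified than the corresponding steps in the paper.
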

\begin{proof}
	
	Let $ (L,[\cdot,\cdot],\alpha,\beta) $  be a  $ 3 $-dimensional  simple multiplicative  BiHom-Lie algebra. 
Then,	By Propsition \ref{semisimple}  the induced Lie algebra $ (L,[\cdot,\cdot]') $ is  $ 3 $-dimensional semisimple. Then there exists a basis $ (h,e,f) $ of $ L $ such that
   $[h,e]'=2\,e $,  $[h,f]'=-2\,e$ and  $[e,f]'=h    $.
  %Let $ \begin{pmatrix}
  %x&y&z\\
  %t&a&e\\
 % f&g&h
  %\end{pmatrix} $ the matrix of $ \alpha $ 
  %and $ \begin{pmatrix}
 % x'&y'&z'\\
 % t'&b&e'\\
 % f'&g'&h'
 % \end{pmatrix} $ 
  %the matrix of $ \beta $
  % with respect to the basis $ (U_{1},U_2,U_3) $. 
  By Proposition \ref{induced}, $ \alpha $
  % and $ \beta $ are 
   is an automorphism of Lie algebra. Hence, 
  \begin{equation}\label{multiplicative}
      [\alpha(h),\alpha(e)]'=2\alpha(e) \text{  and    }         [\alpha(h),\alpha(f)]'=-2\alpha(f).                           
  \end{equation}
Then  $ \alpha(e) $, $ \alpha(f) $ are eigenvectors    of linear map $ g=ad(\alpha(h))=[\alpha(h),\cdot] $   
 with corresponding eigenvalues $ 2 $ and  $ -2 $ respectively.
    % and $\beta([e_i,e_j]')=[\beta(e_i),\beta(e_j)]'  $ for all $ i,\, j\in \{1,2,3\} $.
  A straightforward calculation shows that the eigenvalues of $ \alpha $ are  $ 1 $, $ a $  and $ \frac{1}{a} $. Then a basis $ (u_{1}^{i},u_{2}^{i},u_{3}^{i}) $ can be chosen for $ L $ with respect to which we obtain a matrix  of one of following form:\\
 $ \alpha_1=	 \begin{pmatrix}
 1&0&0\\
 0&a&0\\
 0&0&\frac{1}{a}
 \end{pmatrix}$ $ (a\notin \{-1,1\}) $, $ \alpha_{2}=I_3 $,
 $ \alpha_{3}=	 \begin{pmatrix}
 1&1&0\\
 0&1&1\\
 0&0&1
 \end{pmatrix},$

$ \alpha_{4}=	 \begin{pmatrix}
	1&0&0\\
	0&1&1\\
	0&0&1
\end{pmatrix},$ 
$ \alpha_{5}=	 \begin{pmatrix}
1&0&0\\
0&-1&0\\
0&0&-1
\end{pmatrix},$ 
 $ \alpha_{6}=	 \begin{pmatrix}
 1&0&0\\
 0&-1&1\\
 0&0&-1
 \end{pmatrix}.$\\
 Using $ \alpha_{i}([u_{p}^{i},u_{q}^{i}]_{i}')=[\alpha_{i}(u_{p}^{i}),\alpha_{i}(u_{q}^{i})]_{i}' $, the Jacobie identity and the induced Lie algebra is simple, we obtain:
\begin{align*}
  [u_{1}^{1},u_{2}^{1}]'&=xu_{2}^{1}, &   [u_{1}^{1},u_{3}^{1}]'&=-xu_{3}^{1} ,&   [u_{2}^{1},u_{3}^{1}]'&=yu_{1}^{1}, \\ 
  [u_{1}^{3},u_{2}^{3}]'&=xu_{1}^{3}, &   [u_{1}^{3},u_{3}^{3}]'&=-\frac{x}{2}u_{1}^{3}+xu_{2}^{3} ,&   [u_{2}^{3},u_{3}^{3}]'&=yu_{1}^{3}+\frac{x}{2}u_{2}^{3}+x u_{3}^{3}, \\ 
  [u_{1}^{5},u_{2}^{5}]'&=xu_{2}^{5}+yu_{3}^{5}, &   [u_{1}^{5},u_{3}^{5}]'&=zu_{2}^{5}-xu_{3}^{5} ,&   [u_{2}^{5},u_{3}^{5}]'&=tu_{1}^{5}.          
\end{align*} 
The Lie algebra with the bracket given by $ \alpha_{4} $ or $ \alpha_{6} $ is not simple. For the other cases $ \beta_i $ $ (i\in \{1,2,3,5\}) $ satisfies 
\begin{equation*}
	\alpha_{i}\circ \beta_{i}=\beta_{i}\circ\alpha_i \text{    and   }  \beta_{i}\left([u_{p}^{i},u_{q}^{i}]' \right)=\left[ \beta_{i}(u_{p}^{i}),\beta_{i}(u_{q}^{i})\right]' .
\end{equation*}
We obtain  $ \beta_{1}=	 \begin{pmatrix}
	1&0&0\\
	0&b&0\\
	0&0&\frac{1}{b}
\end{pmatrix}, $ $ \beta_{2}\in\left\lbrace 	 \begin{pmatrix}
1&0&0\\
0&b&0\\
0&0&\frac{1}{b}
\end{pmatrix},   \begin{pmatrix}
1&1&0\\
0&1&1\\
0&0&1
\end{pmatrix},  \begin{pmatrix}
1&0&0\\
0&1&1\\
0&0&1
\end{pmatrix}, \begin{pmatrix}
1&0&0\\
0&-1&1\\
0&0&-1
\end{pmatrix}\right\rbrace  $, $ \beta_{3}= 	 \begin{pmatrix}
1&b&\frac{b^2-b}{2}\\
0&1&b\\
0&0&1
\end{pmatrix},$ $ \beta_{5}\in\left\lbrace  \begin{pmatrix}
-1&0&0\\
0&0&\frac{1}{b}\\
0&b&0
\end{pmatrix} , \begin{pmatrix}
1&0&0\\
0&b&0\\
0&0&\frac{1}{b}
\end{pmatrix} \right\rbrace  $.\\
  Thus,  by some isomorphism of Lie algebras and $ \left[ u_{p}^{i},u_{q}^{i}\right]  =\left[ \alpha_{i}(u_{p}^{i}),\beta_{i}(u_{q}^{i})\right]' $ we we get the BiHom-Lie algebras $ \mathcal{L}_1 $, $ \mathcal{L}_2 $ and $ \mathcal{L}_3 $.                                              
\end{proof}
We note that two simple multiplicative BiHom-Lie algebras of the same type are not necessarily isomorphic.\\

%\subsection{A classification of $ 9 $-dimensional
%	multiplicative simple BiHom-Lie algebras }

\end{document}